\newtheorem{lemma}{Лемма}
\newtheorem{theorem}{Теорема}
\newtheorem{definition}{Определение}
\theoremstyle{remark}
\newtheorem{remark}{\bf Замечание}
\newenvironment{Proof} {\begin{proof}[{\bf Доказательство}]} {\end{proof}}
\DeclareMathOperator{\rank}{rank}
\newcommand{\R}{\mathbb{R}}
\newcommand{\cP}{{\cal P}}
\newcommand{\cD}{{\cal D}}
\newcommand{\cI}{{\cal I}}
\newcommand{\cB}{{\cal B}}
\newcommand{\cH}{{\cal H}}
\newcommand{\cF}{{\cal F}}
\date{}
\begin{document}
\renewcommand{\refname}{\normalsize Литература}

\title{
Локальная идентифицируемость параметра-функции в системе дифференциальных уравнений
\footnote{Исследование выполнено в Санкт-Петербургском международном математическом институте имени Леонарда Эйлера при финансовой поддержке Министерства науки и высшего образования Российской Федерации (соглашение № 075–15–2025–343 от 29.04.2025).}}
\author{В.\,С. Шалгин}

\maketitle

\vspace{-0.5cm}

{\footnotesize
\begin{tabbing}
    \= Санкт-Петербургский государственный университет, \=\\Россия, 199034, Санкт-Петербург, Университетская наб. 7/9, \+\\vladimir.shalgin@spbu.ru, st086496@student.spbu.ru
\end{tabbing}
}

{\small
\begin{quote}
\noindent{\sc Аннотация.} В работе рассматривается задача локальной параметрической идентифицируемости параметра-функции в системе обыкновенных дифференциальных уравнений. Ранее в этой задаче рассматривался случай совпадения размерностей параметра и решения системы и выделялся определенный класс систем, для которых были получены достаточные условия локальной параметрической идентифицируемости. Мы дополняем данные результаты и рассматриваем более широкий класс систем дифференциальных уравнений, а также рассматриваем случай, когда размерность параметра не больше размерности решения системы. Для обоих случаев мы получаем достаточные условия локальной идентифицируемости параметра-функции по наблюдению решения в конечном наборе точек.

\noindent{\bf Ключевые слова:} дифференциальное уравнение, локальная параметрическая идентифицируемость, бесконечномерный параметр, сингулярное разложение матрицы.
\end{quote}
}


\textbf{1. Введение.}
Задача о параметрической идентификации (определении 
параметров системы по наблюдению решений или функций от
них) --- одна из основных задач прикладной теории 
дифференциальных уравнений. При решении этой задачи
важнейшую роль играет свойство локальной идентифицируемости.
Наличие такого свойства означает, что по наблюдению
решений можно однозначно определить значение
параметров системы в окрестности выделенного
параметра. 

Свойство локальной идентифицируемости
детально изучено в монографии \cite{Bodunov2006AnIntroduction}. Как в монографии \cite{Bodunov2006AnIntroduction}, так и в других исследованиях по данной теме в основном изучался случай конечномерного
параметра. 

Мы можем отметить несколько публикаций, в которых исследуется локальная идентифицируемость бесконечномерного параметра. 
В работе \cite{Pilyugin2024Local} рассматривается несколько формулировок задачи локальной параметрической идентифицируемости для динамических систем с дискретным временем: были получены достаточные условия локальной идентифицируемости па\-ра\-мет\-ра-пос\-ле\-до\-ва\-тель\-нос\-ти для общего случая, когда наблюдения траектории происходят во всех точках траектории, и для случая, когда наблюдения производятся в произвольном счетном множестве точек; также рассмотрен случай линейно возмущенного диффеоморфизма в окрестности гиперболического множества. В статье \cite{Shalgin} изучается случай линейной дискретной системы, зависящей от параметра-последовательности: получены достаточные и необходимые условия локальной параметрической идентифицируемости, также показано, что свойство локальной идентифицируемости топологически типично.

В данной работе мы исследуем постановку задачи, которая также изучалась в статьях \cite{Bodunov2009Local, Volfson2011Local, Pilyugin2023Conditions}. Опишем ее здесь. Рассматривается зависящая от параметра $p$ система обыкновенных дифференциальных уравнений
\begin{equation}
\label{1}
\dot{x}=f(t,x,p), \quad t\in[0,T],
\end{equation}
где $x\in\mathbb{R}^n$, $p\in\mathbb{R}^l$. Предполагается, что вектор-функция $f$, стоящая в правой части системы \eqref{1}, непрерывна по совокупности переменных $(t,x,p)$ и непрерывно дифференцируема по переменным $(x,p)$ в некоторой области $D$ пространства $\mathbb{R}^{1+n+l}$. 

Параметр $p$ является функцией от $t$, принадлежащей некоторому классу $C^1$-гладких функций $\cP$, определенных на некотором интервале $\cI$, содержащем отрезок $[0,T]$. Введем обозначение
$$
    \{p\}=\{p(t):\;t\in[0,T]\}.
$$

Мы фиксируем вектор $x_0\in\mathbb{R}^n$ и обозначаем через $x(t,\{p\})$
решение задачи Коши с начальными данными $(0,x_0)$
для системы \eqref{1}, в которой зафиксирована
параметр-функция $p\in\cP $. Мы предполагаем, что
для любой функции $p\in\cP $ система
$$
    \dot{x}=f(t,x,p(t))
$$
удовлетворяет условиям существования и единственности, решение $x(t,\{p\})$ оп\-ре\-де\-ле\-но на всем промежутке $[0,T]$ и 
$$
    \{(t,x(t,\{p\}),p(t)):\;t\in[0,T]\}\subset D.
$$

Для непрерывной на отрезке $[a,b]$ вектор-функции $q(t)$ будем обозначать
$$
    \|q\|_{a,b}=\max_{t\in[a,b]}|q(t)|.
$$

Будем говорить, что пара параметров $(p_1,p_2)$, $p_1,p_1\in\cP$ различима по наблюдению на множестве $\Theta\subset[0,T]$, если найдется такое $\theta_0\in\Theta$, что
$$
    x(\theta_0,\{p_1\})\neq x(\theta_0,\{p_2\}).
$$
Мы фиксируем эталонный параметр $p_0(t)\in \cP$ и используем следующее определение локальной идентифицируемости параметра $p_0(t)$:
\begin{definition}\label{def_loc_ident_func}
    Система \eqref{1} называется локально параметрически идентифицируемой в классе $\cP$ для параметра $p_0\in\cP$ по наблюдению на множестве $\Theta$, если существует такое $\varepsilon>0$, что если $p\in\cP$ и $0<\|p-p_0\|_{0,T}<\varepsilon$, то пара $(p_0,p)$ различима по наблюдению на множестве $\Theta$.
\end{definition}
Целью является нахождение достаточных условий локальной иден\-ти\-фи\-ци\-руе\-мос\-ти параметра $p_0(t)$ с описанием подходящих класса $\cP$ рас\-смат\-ри\-вае\-мых параметров и множества $\Theta$, на котором наблюдаются решения.

В работе \cite{Bodunov2009Local} было получено достаточное условие локальной идентифицируемости параметра-функции в системе \eqref{1}, когда наблюдение решения происходит в конечной точке $T$. В работе \cite{Volfson2011Local} решена та же задача для случая, когда решение наблюдается в конечном числе точек отрезка $[0,T]$ и при этом семейство $\cP$ рассматриваемых параметр-функций конечномерно. Статья \cite{Pilyugin2023Conditions} развивает идеи предыдущих статей: класс рассматриваемых систем дифференциальных уравнений был определен таким образом, что параметр-функция из некоторого бесконечномерного семейства $\cP$ может быть однозначно определена путем наблюдения решения в конечном числе точек отрезка $[0,T]$. 

В данной работе мы реализуем подход, аналогичный статье \cite{Pilyugin2023Conditions} и дополняем ее результаты. 
В работе \cite{Pilyugin2023Conditions} рассматривался случай системы \eqref{1}, когда $n=l$, т.~е. размерность параметра совпадает с размерностью решения, матрица $\frac{\partial f}{\partial p}(t,x,p)$ непрерывно дифференцируема в области $D$ и определитель $C^1$-гладкой матрицы 
\begin{equation}\label{matrix_D}
    \cD(t)=\frac{\partial f}{\partial p}(t,x(t,\{p_0\}),p_0(t))
\end{equation}
имеет только простые нули (и тогда множество его нулей конечно на отрезке $[0,T]$), наблюдение решения в которых позволяет локально идентифицировать параметр $p_0(t)$. 

В статье \cite{Pilyugin2023Conditions} используется линейное разложение выражения $x(t,\{p\})-x(t,\{p_0\})$, впервые полученное в статье \cite{Volfson2011Local}. Мы приводим его здесь.

Пусть $\tau\in[0,T)$. Обозначим через $Y_{\tau}(t)$ фундаментальную матрицу линейной системы
$$
    \dot{y}=\frac{\partial f}{\partial x}(t,x(t,\{p_0\}),p_0(t))\,y,
$$
удовлетворяющую начальному условию $Y_{\tau}(\tau)=I$, где $I$ --- единичная матрица размера $n\times n$.

Определим для $0\le\tau<\theta\le T$ линейное отображение $\Psi_{\tau,\theta} :\;C([0,T])\to\mathbb{R}^n$ формулой
\begin{equation}\label{Psi}
    \Psi_{\tau,\theta}(p)=\int_{\tau}^\theta Y^{-1}_{\tau}(s)
\frac{\partial f}{\partial p}(s,x(s,\{p_0\}),p_0(s))\,p(s)\,ds.
\end{equation}
Если 
$$
\Delta p(t)=p(t)-p_0(t),
$$
$$
\Delta_p x(t)=x(t,\{p\})-x(t,\{p_0\}),
$$
$$
\Delta_p x(\tau)=0,
$$
то имеет место представление
\begin{equation}\label{rep}
    \Delta_p x(\theta)=Y_\tau(\theta)\Psi_{\tau,\theta}(\Delta p)
    +G_{\tau,\theta}(\Delta p),
\end{equation}
где
\begin{equation}\label{sm}
    \frac{|G_{\tau,\theta}(\Delta p)|}
    {\|\Delta p\|_{\tau,\theta}}\to 0
    ~\mbox{  при  }~\|\Delta p\|_{\tau,\theta}\to 0.
\end{equation}

В данной работе мы рассматриваем два случая системы \eqref{1} и приводим достаточные условия локальной идентифицируемости параметра $p_0(t)$. В разделе 2 мы считаем, что $n=l$ и нули определителя матрицы \eqref{matrix_D} не обязательно простые и наблюдаем решение в этих нулях. В разделе 3 мы считаем, что $l\le n$ и, так как матрица \eqref{matrix_D} не обязательно квадратная, мы вводим условие в некотором смысле аналогичное простоте нулей определителя матрицы \eqref{matrix_D}, и наблюдаем решение в соответствующем конечном множестве точек.

\vskip2mm
{\bf 2. Случай равного количества параметров и уравнений.}
В данном разделе мы предполагаем, что $l=n$ в уравнении \eqref{1}, т.~е. размерность параметра $p$ совпадает с размерностью решения $x$. Мы будем считать, что матрица
$$
    \frac{\partial f}{\partial p}(t,x,p)
$$
класса $C^1$ в области $D$. Как и выше, мы обозначим
$$
    \cD(t)=\frac{\partial f}{\partial p}(t,x(t,\{p_0\}),p_0(t)).
$$

\begin{definition}\label{classK}
    Будем говорить, что система \eqref{1} принадлежит классу ${\cal K}(p_0)$, если для любого такого $\tau\in[0,T]$, что
    \begin{equation}\label{klass_K_D_zeros}
        \det\cD (\tau)=0,
    \end{equation}
    существуют такие $h_{\tau}\neq0$ и натуральное $\nu_{\tau}$, что
    \begin{equation}\label{klass_K_D_zeros_asimptotics}
        \det\cD(\tau+t)=h_{\tau}t^{\nu_{\tau}}+o(t^{\nu_{\tau}}),\quad t\to0.
    \end{equation}
\end{definition}

Так как матрица $\cD(t)$ класса $C^1$, то множество точек $\tau$, для которых выполнены условия \eqref{klass_K_D_zeros}---\eqref{klass_K_D_zeros_asimptotics}, конечно. Обозначим через $0<\tau_1<\tau_2<\ldots<\tau_s<T$ точки интервала $(0,T)$, удовлетворяющие условиям \eqref{klass_K_D_zeros}---\eqref{klass_K_D_zeros_asimptotics}, через $\nu_1,\nu_2,\ldots,\nu_s$ обозначим соответствующие им показатели в \eqref{klass_K_D_zeros_asimptotics}, т.~е. для каждого $i=1,2,\ldots,s$ выполнена асимптотика \eqref{klass_K_D_zeros_asimptotics}, где $\tau=\tau_i$, $\nu=\nu_i$. Положим $\tau_0=0$, $\tau_{s+1}=T$. Если $\det\cD(0)\ne0$, то положим $\nu_{0}=0$, если $\det\cD(T)\ne0$, то положим $\nu_{s+1}=0$. Обозначим
\begin{equation}\label{Theta_K}
    \Theta_{\cal K}(p_0)=\{\tau_0,\tau_1,\dots,\tau_{s+1}\},
\end{equation}
\begin{equation}\label{theta_K}
    \vartheta_{\cal K}(p_0)=(\nu_0,\nu_1,\ldots,\nu_{s+1}).
\end{equation}

Для непрерывной на отрезке $[a,b]$ вектор-функции $q(t)$ будем обозначать
$$
    \|q\|_{i,a,b}=\int_a^b |q(t)|\;dt.
$$

Введем классы $K(\tau_k,\tau_{k+1})$ параметр-функций, зависящих от промежутка $[\tau_k,\tau_{k+1}]$, $k=0,1,\ldots,s$. Зафиксируем $k=0,1,\ldots,s$, обозначим $\tau=\tau_k$, $\theta=\tau_{k+1}$, $\nu=\max(\nu_{k},\nu_{k+1})$. Будем говорить, что функция $p(t)\in C^1({\cal I})$ принадлежит классу $K(\tau,\theta)$, если, во-первых, существуют такие числа $\alpha>0$, $\beta>0$, зависящие от промежутка $[\tau,\theta]$, что
\begin{equation}\label{delta_p_alpha}
    \|\Delta p\|_{i,\tau,\theta}
    \geq\alpha\|\Delta p\|_{\tau,\theta},
\end{equation}
\begin{equation}\label{Psi_delta_p_beta}
    |\Psi_{\tau,\theta}(\Delta p)|
    \geq\beta\|\cD\Delta p\|_{i,\tau,\theta},
\end{equation}
где $\Psi_{\tau,\theta}$ определено в \eqref{Psi} и $\Delta p=p-p_0$, а во-вторых, выполняется одно из следующих четырех условий:
\begin{enumerate}
    \item[($K_1$)] $\det\cD (\tau)=\det\cD (\theta)=0$ и существуют такие $\gamma>0$ и $\kappa>0$, зависящие от промежутка $[\tau,\theta]$, что
    \begin{equation}\label{ineq_D_Delta_p_kappa_1}
        |\cD(t)\Delta p(t)|\le \kappa(t-\tau)^{\nu}\|\Delta p\|_{\tau,\theta},\quad t\in[\tau,\tau+\gamma),
    \end{equation}
    \begin{equation}\label{ineq_D_Delta_p_kappa_2}
        |\cD(t)\Delta p(t)|\le \kappa(\theta-t)^{\nu}\|\Delta p\|_{\tau,\theta},\quad t\in(\theta-\gamma,\theta],
    \end{equation}
    \item[($K_2$)] $\tau=0$, $\det\cD (\tau)\ne0$, $\det\cD (\theta)=0$ и существуют такие $\gamma>0$ и $\kappa>0$, зависящие от промежутка $[\tau,\theta]$, что
    $$
        |\cD(t)\Delta p(t)|\le \kappa(\theta-t)^{\nu}\|\Delta p\|_{\tau,\theta},\quad t\in(\theta-\gamma,\theta],
    $$
    \item[($K_3$)] $\theta=T$, $\det\cD (\tau)= 0$, $\det\cD (\theta)\ne0$ и существуют такие $\gamma>0$ и $\kappa>0$, зависящие от промежутка $[\tau,\theta]$, что
    $$
        |\cD(t)\Delta p(t)|\le \kappa(t-\tau)^{\nu}\|\Delta p\|_{\tau,\theta},\quad t\in[\tau,\tau+\gamma),
    $$
    \item[($K_4$)] $\tau=0$, $\theta=T$, $\det\cD (\tau)\ne 0$, $\det\cD (\theta)\ne0$.
\end{enumerate}

Покажем, что множество $K(\tau,\theta)$ является невырожденным бесконечномерным семейством функций. Так как интеграл от не тождественно равной нулю неотрицательной функции не равен нулю и так как отображение $\Psi_{\tau, \theta}$ ненулевое, то найдутся такие функции $p\neq p_0$, что \eqref{delta_p_alpha}---\eqref{Psi_delta_p_beta} выполнены для некоторых $\alpha>0$, $\beta>0$.

Обозначим через $\|\Psi_{\tau,\theta}\|_i$ норму линейного отображения $\Psi_{\tau, \theta}$, согласованную с нормой аргумента $\|\cdot\|_{i,\tau,\theta}$, т.~е. для любой вектор-функции $q(t)\in C([\tau,\theta])$ выполнено неравенство
\begin{equation*}\label{norm_psi}
    |\Psi_{\tau,\theta}(q)|\leq \|\Psi_{\tau,\theta}\|_i\cdot\|q\|_{i,\tau,\theta}.
\end{equation*}
Ясно, что $\|\Psi_{\tau,\theta}\|_i\leq \max_{t\in[\tau,\theta]}\|Y_{\tau}^{-1}(t)\cD(t)\|,$
где $\|\cdot\|$ --- норма матрицы. Обозначим через $\|\cD\|_{i,\tau,\theta}$ норму линейного оператора $\cD(t)$, действующего в пространстве $C([\tau,\theta])$ с нормой $\|\cdot\|_{i,\tau,\theta}$. Тогда для любой вектор-функции $q(t)\in C([\tau,\theta])$ выполнено неравенство
\begin{equation*}\label{norm_cal_D}
    \|\cD q\|_{i,\tau,\theta}\leq \|\cD\|_{i,\tau,\theta}\cdot \|q\|_{i,\tau,\theta}.
\end{equation*}
Ясно, что $\|\cD\|_{i,\tau,\theta}\leq\max_{t\in[\tau,\theta]}\|\cD(t)\|$.

Множество функций $p$, для которых выполнены неравенства \eqref{ineq_D_Delta_p_kappa_1}---\eqref{ineq_D_Delta_p_kappa_2} бесконечномерно. Действительно, если функция $p$ такова, что $|\Delta p(t)|\le \kappa_0(t-\tau)^{\nu}$ при $t\in[\tau,\tau+\gamma)$, $\kappa_0>0$ (это, например, любая функция, аналитическая в окрестности своего нуля $\tau$ порядка $\nu$) и если $\kappa=\frac{\kappa_0\|\cD\|_{i,\tau,\theta}}{\|\Delta p\|_{\tau,\theta}}$, то
\[
    |\cD(t)\Delta p(t)|\leq \|\cD(t)\|\cdot|\Delta p(t)|
    \le \kappa (t-\tau)^{\nu}\|\Delta p\|_{\tau,\theta}.
\]
Значит, множество функций, удовлетворяющих неравенству в условии $(K_3)$, невырожденное и бесконечномерное. Аналогичный вывод можно сделать для условий $(K_1)-(K_2)$.

Пусть $p\in K(\tau,\theta)$, тогда для $p$ выполнены неравенства \eqref{delta_p_alpha}---\eqref{Psi_delta_p_beta} и, предположим, выполнено условие $(K_3)$. Случаи, когда выполнены условия $(K_1),(K_2)$, рассматриваются аналогично; если выполнено условие $K_4$, то дополнительных ограничений на $p$ не накладывается. 

Если $C^1$-гладкая функция $q(t)$ такова, что для нее выполнены неравенства
\begin{equation}\label{Dq}
    |\cD(t)q(t)|\le \kappa(t-\tau)^{\nu}\|\Delta p\|_{\tau,\theta},\quad t\in[\tau,\tau+\widetilde{\gamma}),
\end{equation}
где $\widetilde{\gamma}\leq \gamma$,
$$
    \|q\|_{\tau,\theta}\leq \frac{1}{2}\|\Delta p\|_{\tau, \theta},
$$
$$
    \|q\|_{i,\tau, \theta}\leq \min\left(\frac{\beta\|\cD\Delta p\|_{i,\tau, \theta}}{2\|\Psi_{\tau,\theta}\|_i},\frac{\|\cD\Delta p\|_{i,\tau, \theta}}{\|\cD\|_{i,\tau,\theta}},\frac{\alpha}{2}\|\Delta p\|_{\tau, \theta}\right),
$$
то функция $p+q$ принадлежит классу $K(\tau,\theta)$ с соответствующими числами $\widetilde{\alpha}\le\frac{\alpha}{4}$, $\widetilde{\beta}\leq\frac{\beta}{4}$, $\widetilde{\gamma}\le\gamma$, $\widetilde{\kappa}\ge4\kappa$. Действительно,
\begin{equation}\label{ineqB1}
\begin{split}
    |\Psi_{\tau, \theta}(\Delta p+q)|
    &\geq|\Psi_{\tau, \theta}(\Delta p)|-|\Psi_{\tau, \theta}(q)|\\&
    \ge\beta\|\cD \Delta p\|_{i,\tau,\theta}-\|\Psi_{\tau,\theta}\|_i\cdot
    \|q\|_{i,\tau, \theta}\\&
    \ge\frac{\beta}{2}\|\cD\Delta p\|_{i,\tau, \theta},
\end{split}
\end{equation}
и, с другой стороны,
\begin{equation}\label{ineqB2}
    \widetilde{\beta}\|\cD(\Delta p+q)\|_{i,\tau,\theta}
    \le \frac{\beta}{4}\left(\|\cD\Delta p\|_{i,\tau,\theta}
    +\|\cD\|_{i,\tau,\theta}\cdot\|q\|_{i,\tau,\theta}\right)
    \le \frac{\beta}{2}\|\cD\Delta p\|_{i,\tau, \theta}.
\end{equation}
Сравнивая неравенства \eqref{ineqB1}---\eqref{ineqB2}, мы получаем неравенство \eqref{Psi_delta_p_beta} для $\Delta p+q$ с числом $\widetilde{\beta}$. 

Неравенство \eqref{delta_p_alpha} для $\Delta p+q$ с числом $\widetilde{\alpha}$ вытекает из следующих двух неравенств:
\begin{equation*}
    \|\Delta p+q\|_{i,\tau, \theta}
    \ge \|\Delta p\|_{i,\tau, \theta}-\|q\|_{i,\tau, \theta}
    \ge \frac{\alpha}{2}\|\Delta p\|_{\tau, \theta},
\end{equation*}
\[
    \widetilde{\alpha}\|\Delta p+q\|_{\tau, \theta}
    \le \frac{\alpha}{4}(\|\Delta p\|_{\tau, \theta}+\|q\|_{\tau, \theta})
    \le \frac{\alpha}{2}\|\Delta p\|_{\tau, \theta},
\]
где мы использовали, что $\|q\|_{\tau,\theta}\leq \|\Delta p\|_{\tau, \theta}$. 

Так как для $q(t)$ выполнено неравенство \eqref{Dq}, то
\[
    |\cD(t)(\Delta p(t)+q(t))|\leq
    2\kappa (t-\tau)^{\nu}\|\Delta p\|_{\tau,\theta}
    =4\kappa (t-\tau)^{\nu}\left(\|\Delta p\|_{\tau,\theta}-\frac12\|\Delta p\|_{\tau,\theta}\right)
\]
\[
    \leq 4\kappa (t-\tau)^{\nu}\left(\|\Delta p\|_{\tau,\theta}-\|q\|_{\tau,\theta}\right)
    \leq \widetilde{\kappa}(t-\tau)^{\nu}\|\Delta p+q\|_{\tau,\theta},
\]
и мы получаем неравенство в условии $(K_3)$ для $\Delta p+q$ с числами $\widetilde{\kappa}$ и $\widetilde{\gamma}$.

\begin{theorem}\label{th_diff_K}
    Предположим, что система \eqref{1} принадлежит классу ${\cal K}(p_0)$ с множествами $\Theta_{\cal K}(p_0)$, $\vartheta_{\cal K}(p_0)$ имеющими вид \eqref{Theta_K}---\eqref{theta_K} тогда система \eqref{1} локально параметрически идентифицируема в классе 
    $$
        \cP_K=\bigcap_{k=0}^{s}K(\tau_k,\tau_{k+1})
    $$
    при параметре $p_0\in\cP_K$ по наблюдениям на множестве $\Theta_{\cal K}(p_0)$.
\end{theorem}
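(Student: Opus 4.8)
I need to prove local parametric identifiability. Let me understand the structure here.\textbf{План доказательства.}
Идея состоит в том, чтобы доказывать различимость пары $(p_0,p)$ не на всём отрезке сразу, а на каждом из промежутков $[\tau_k,\tau_{k+1}]$, где концами служат наблюдаемые точки из множества $\Theta_{\cal K}(p_0)$, и затем собрать результаты воедино. Так как $p\in\cP_K$, то $p$ принадлежит каждому классу $K(\tau_k,\tau_{k+1})$, а значит для неё на каждом таком промежутке выполнены неравенства \eqref{delta_p_alpha}---\eqref{Psi_delta_p_beta} и одно из условий $(K_1)$---$(K_4)$. Прежде всего я бы заметил, что поскольку $\Delta p=p-p_0$ не равна тождественно нулю на $[0,T]$ (так как $0<\|p-p_0\|_{0,T}$), найдётся хотя бы один промежуток $[\tau_k,\tau_{k+1}]$, на котором $\|\Delta p\|_{\tau_k,\tau_{k+1}}>0$. Достаточно показать, что на этом промежутке решения различаются хотя бы в одном из его концов, поскольку оба конца принадлежат $\Theta_{\cal K}(p_0)$.

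Зафиксируем такой промежуток, обозначив $\tau=\tau_k$, $\theta=\tau_{k+1}$. План состоит в том, чтобы применить представление \eqref{rep}. Если решения совпадают в точке $\tau$, то $\Delta_p x(\tau)=0$, и тогда
$$
    \Delta_p x(\theta)=Y_\tau(\theta)\Psi_{\tau,\theta}(\Delta p)+G_{\tau,\theta}(\Delta p).
$$
Ключевой шаг --- оценить главный член $Y_\tau(\theta)\Psi_{\tau,\theta}(\Delta p)$ снизу через $\|\Delta p\|_{\tau,\theta}$. Для этого я бы воспользовался цепочкой: неравенство \eqref{Psi_delta_p_beta} даёт $|\Psi_{\tau,\theta}(\Delta p)|\ge\beta\|\cD\Delta p\|_{i,\tau,\theta}$, а неравенства \eqref{delta_p_alpha} вместе с условием типа $(K_1)$---$(K_4)$ позволяют оценить $\|\cD\Delta p\|_{i,\tau,\theta}$ снизу через $\|\Delta p\|_{\tau,\theta}$. Именно здесь существенны асимптотические неравенства вида \eqref{ineq_D_Delta_p_kappa_1}---\eqref{ineq_D_Delta_p_kappa_2}: около нулей определителя $\det\cD$ матрица $\cD(t)$ вырождается, поэтому произведение $\cD(t)\Delta p(t)$ может быть малым, и условия $(K_1)$---$(K_4)$ как раз контролируют скорость этого вырождения через степень $(t-\tau)^{\nu}$, согласованную с порядком нуля определителя. Это позволяет отделить вклад окрестностей концов от вклада внутренней части промежутка, где $\det\cD$ отделён от нуля и матрица $\cD$ обратима.

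Получив оценку $|Y_\tau(\theta)\Psi_{\tau,\theta}(\Delta p)|\ge c\,\|\Delta p\|_{\tau,\theta}$ с некоторой константой $c>0$, зависящей от $\alpha,\beta$, от норм $Y_\tau^{-1}$ и от констант вырождения, я бы воспользовался соотношением \eqref{sm}: выбрав $\varepsilon>0$ столь малым, что при $\|\Delta p\|_{\tau,\theta}<\varepsilon$ остаточный член удовлетворяет $|G_{\tau,\theta}(\Delta p)|\le\frac{c}{2}\|\Delta p\|_{\tau,\theta}$, получаю
$$
    |\Delta_p x(\theta)|\ge|Y_\tau(\theta)\Psi_{\tau,\theta}(\Delta p)|-|G_{\tau,\theta}(\Delta p)|\ge\frac{c}{2}\|\Delta p\|_{\tau,\theta}>0.
$$
Следовательно, $x(\theta,\{p\})\ne x(\theta,\{p_0\})$, то есть решения различаются в точке $\theta\in\Theta_{\cal K}(p_0)$, что и требуется. Остаётся аккуратно провести индукцию по промежуткам: если на первых промежутках решения совпадают в обоих концах, то в первом же промежутке с $\|\Delta p\|_{\tau,\theta}>0$ начальное условие $\Delta_p x(\tau)=0$ выполнено автоматически, и приведённое рассуждение применимо.

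Главным препятствием я ожидаю именно равномерную (по всем допустимым $p$ из окрестности) оценку снизу для $\|\cD\Delta p\|_{i,\tau,\theta}$ через $\|\Delta p\|_{\tau,\theta}$ вблизи кратных нулей определителя $\cD$: нужно корректно скомбинировать интегральные неравенства \eqref{delta_p_alpha} и поточечные асимптотические оценки $(K_1)$---$(K_4)$ так, чтобы выбор $\varepsilon$ в \eqref{sm} можно было сделать не зависящим от конкретной функции $p$, а лишь от констант $\alpha,\beta,\gamma,\kappa$, фиксированных для класса. Возможно, для этого потребуется разбить промежуток $[\tau,\theta]$ на приграничные куски $[\tau,\tau+\gamma]$, $[\theta-\gamma,\theta]$ и центральную часть, и оценивать вклад каждого по отдельности.
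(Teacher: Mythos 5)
Ваш план совпадает по существу с доказательством в статье: та же схема «представление \eqref{rep} на каждом промежутке $[\tau_k,\tau_{k+1}]$, оценка главного члена снизу через \eqref{Psi_delta_p_beta}, \eqref{delta_p_alpha} и оценку вида $\|\cD\Delta p\|_{i,\tau,\theta}\ge\lambda\|\Delta p\|_{i,\tau,\theta}$, оценка остатка сверху через \eqref{sm}», причём выделенное вами «главное препятствие» и способ его преодоления (разбиение промежутка на приграничные куски и центральную часть, где $\det\cD$ отделён от нуля) --- это в точности содержание и метод доказательства леммы \ref{lemma_norm_ineq_H}. Различие лишь в логической упаковке (вы идёте от первого промежутка с $\Delta p\not\equiv0$, статья --- от противного), что несущественно.
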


Класс $\cP_{K}$ очевидно невырожденный и бесконечномерный. Для доказательства нам понадобится техническая лемма ниже. Будем обозначать через $\mu(A)$ мининорму произвольной матрицы $A$, т.~е. число
$$
    \mu(A)=\min_{|x|=1}|Ax|.
$$
Хорошо известно, что если матрица $A$ квадратная и невырожденная, то
$$
    \mu(A)=\frac{1}{\|A^{-1}\|},
$$
где $\|\cdot\|$ --- норма матрицы. 

\begin{lemma}\label{lemma_norm_ineq_H}
    Пусть $A(t)$ --- такая $n\times n$ матрица
класса $C^1([a,b])$, что
$$
\det A(t)\neq 0,\quad t\in(a,b),
$$
и если $\det A(c)=0$, $c\in\{a,b\}$, то найдутся такие ненулевое $h_c$ и натуральное $\nu_c$, что
\begin{equation}\label{A_asimptotic}
    \det A(c+t)=h_c|t|^{\nu_{c}}+o(|t|^{\nu_c}),\quad |t|\to0.
\end{equation}

Пусть $r(t)$ --- непрерывная функция на $[a,b]$,
удовлетворяющая неравенству 
$$
\|r\|_{i,a,b}\geq \alpha\|r\|_{a,b}
$$
с некоторым $\alpha>0$.

Предположим, что выполнено одно из следующих условий:

\begin{enumerate}
    \item[1)] $\det A (a)=\det A (b)=0$ и существуют такие $\gamma>0$ и $\kappa>0$, что
    \begin{equation}\label{strong_cond_Ar_0}
        |A(t)r(t)|\le \kappa(t-a)^{\max(\nu_a,\nu_b)}\|r\|_{a,b},\quad t\in[a,a+\gamma),
    \end{equation}
    \begin{equation}\label{strong_cond_Ar_1}
        |A(t)r(t)|\le \kappa(b-t)^{\max(\nu_a,\nu_b)}\|r\|_{a,b},\quad t\in(b-\gamma,b],
    \end{equation}
    \item[2)] $\det A (a)\ne0$, $\det A (b)=0$ и существуют такие $\gamma>0$ и $\kappa>0$, что
    $$
        |A(t)r(t)|\le \kappa(b-t)^{\nu_b}\|r\|_{a,b},\quad t\in(b-\gamma,b],
    $$
    \item[3)] $\det A (a)=0$, $\det A (b)\ne0$ и существуют такие $\gamma>0$ и $\kappa>0$, что
    $$
        |A(t)r(t)|\le \kappa(t-a)^{\nu_a}\|r\|_{a,b},\quad t\in[a,a+\gamma),
    $$
    \item[4)] $\det A (a)\ne0$, $\det A (b)\ne0$.
\end{enumerate}

    Тогда существует такое 
    $\lambda>0$, зависящее только от $A(t)$, $\alpha$, $\gamma$ и $\kappa$, что
    $$
        \|Ar\|_{i,a,b}\geq\lambda \|r\|_{i,a,b}.
    $$
\end{lemma}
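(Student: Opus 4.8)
The plan is to extract a quantitative lower bound for the mininorm $\mu(A(t))$ near a degenerate endpoint out of the determinant asymptotics \eqref{A_asimptotic}. Let $\sigma_1(t)\ge\dots\ge\sigma_n(t)=\mu(A(t))$ be the singular values of $A(t)$, so that $|\det A(t)|=\prod_{i=1}^n\sigma_i(t)$ and $\mu(A(t))=|\det A(t)|/\prod_{i=1}^{n-1}\sigma_i(t)$. Setting $M=\max_{t\in[a,b]}\|A(t)\|$ we have $\sigma_i(t)\le M$, hence $\mu(A(t))\ge |\det A(t)|/M^{n-1}$. In case~1), the asymptotics at $a$ give $|\det A(t)|\ge\tfrac{|h_a|}{2}(t-a)^{\nu_a}$ on some $(a,a+\gamma')$, whence $\mu(A(t))\ge c_a(t-a)^{\nu_a}$ and $\|A(t)^{-1}\|=1/\mu(A(t))\le c_a^{-1}(t-a)^{-\nu_a}$ there; symmetrically $\|A(t)^{-1}\|\le c_b^{-1}(b-t)^{-\nu_b}$ near $b$.

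Next I would localize. Fix $\widetilde\gamma\le\gamma$ to be chosen and split $[a,b]=I_a\cup I_m\cup I_b$ with $I_a=[a,a+\widetilde\gamma)$, $I_b=(b-\widetilde\gamma,b]$, $I_m=[a+\widetilde\gamma,b-\widetilde\gamma]$. On $I_m$ the determinant does not vanish, so $\mu(A(t))\ge\mu_0:=\min_{I_m}\mu(A(\cdot))>0$, and from $|A(t)r(t)|\ge\mu(A(t))|r(t)|$ we get $\|Ar\|_{i,a,b}\ge\|Ar\|_{i,I_m}\ge\mu_0\|r\|_{i,I_m}$. On $I_a$ I would combine $|r(t)|\le\|A(t)^{-1}\|\,|A(t)r(t)|$ with \eqref{strong_cond_Ar_0} and the bound above: since $\max(\nu_a,\nu_b)\ge\nu_a$ no negative power survives, giving $|r(t)|\le C(t-a)^{\max(\nu_a,\nu_b)-\nu_a}\|r\|_{a,b}\le C'\|r\|_{a,b}$, and therefore $\|r\|_{i,I_a}\le C_a(\widetilde\gamma)\|r\|_{a,b}$ with $C_a(\widetilde\gamma)\to0$ as $\widetilde\gamma\to0$; using \eqref{strong_cond_Ar_1} analogously, $\|r\|_{i,I_b}\le C_b(\widetilde\gamma)\|r\|_{a,b}$.

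The crux is to close the estimate with the spread hypothesis $\|r\|_{i,a,b}\ge\alpha\|r\|_{a,b}$. Writing $\|r\|_{i,I_m}=\|r\|_{i,a,b}-\|r\|_{i,I_a}-\|r\|_{i,I_b}$ and inserting the endpoint bounds together with $\|r\|_{a,b}\le\alpha^{-1}\|r\|_{i,a,b}$ yields
\[
  \|r\|_{i,I_m}\ge\Bigl(1-\tfrac{C_a(\widetilde\gamma)+C_b(\widetilde\gamma)}{\alpha}\Bigr)\|r\|_{i,a,b}.
\]
I then fix $\widetilde\gamma\le\gamma$ (depending only on $A$, $\alpha$, $\gamma$, $\kappa$) so small that the bracket is $\ge\tfrac12$; this freezes $\mu_0>0$, and the middle estimate gives $\|Ar\|_{i,a,b}\ge\tfrac{\mu_0}{2}\|r\|_{i,a,b}$, i.e. the claim with $\lambda=\mu_0/2$. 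Cases~2) and~3) are identical with the nondegenerate endpoint absorbed into $I_m$ (its mininorm stays bounded below), and case~4) is immediate, since then $\mu(A(t))\ge\mu_0>0$ on all of $[a,b]$ and the $\alpha$-hypothesis is not even needed.

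The main obstacle I anticipate is the apparent tension in choosing $\widetilde\gamma$: shrinking it kills the endpoint contributions but could a priori degrade $\mu_0$. The resolution is that the endpoint bounds \eqref{strong_cond_Ar_0}--\eqref{strong_cond_Ar_1}, via the mininorm estimate $\mu(A)\ge|\det A|/M^{n-1}$, force the fraction of the $L^1$-mass of $r$ near the degenerate endpoints to vanish \emph{independently} of $\mu_0$; hence $\widetilde\gamma$ may be fixed first and $\mu_0$ read off afterwards, with no circularity. The one delicate point is checking that the exponent $\max(\nu_a,\nu_b)$ in the hypothesis exactly cancels the blow-up $(t-a)^{-\nu_a}$ of $\|A(t)^{-1}\|$, which is precisely where the strength of the conditions is used.
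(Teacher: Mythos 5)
Your proof is correct and follows essentially the same route as the paper's: split $[a,b]$ into a middle interval on which $\mu(A(t))$ is bounded below and small neighbourhoods of the degenerate endpoints whose contribution to $\|r\|_{i,a,b}$ is controlled via the hypothesis $\|r\|_{i,a,b}\ge\alpha\|r\|_{a,b}$. The only differences are bookkeeping: the paper keeps the quantitative bound $\mu(A(t))\ge Mc^{\nu}$ on $[c,1-c]$ (via the adjugate formula) and optimizes over $c$, whereas you fix the cutoff $\widetilde\gamma$ first and then read off $\mu_0$ as a minimum over the resulting compact middle interval --- both choices close the estimate and give a $\lambda$ depending only on the admissible data.
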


\begin{proof}[{\bf Доказательство леммы \ref{lemma_norm_ineq_H}}]
Пусть $[a,b]=[0,1]$. Предположим, что $\det A(t)>0$ при $t\in(0,1)$. Рассмотрим случай, когда выполнено условие 4. 
Так как 
$$
    A(t)^{-1}=\frac{\mathrm{adj}(A(t))}{\det A(t)},
$$
где $\mathrm{adj}(A(t))$ --- присоединенная матрица, составленная из алгебраических дополнений к элементам матрицы $A(t)$, то 
\[
    \mu(A(t))=\frac{1}{\|A(t)^{-1}\|}=\frac{|\det A(t)|}{\|\mathrm{adj}(A(t))\|}.
\]
Значит, мининорма матрицы $A(t)$ непрерывно зависит от $t$ и, так как $\det A(t)>0$, $t\in[0,1]$, мининорма положительна для всех $t\in[0,1]$. Пусть
\[
    \lambda=\min\limits_{t\in[0,1]}\mu(A(t))>0,
\]
тогда
\[
    \int_0^1|A(t)r(t)|\,dt\geq \lambda\int_0^1 |r(t)|dt,
\]
что и требовалось.

Предположим, что выполнено условие 1. Из асимптотики \eqref{A_asimptotic} следует, что для произвольного $\varepsilon\in(0,\min(h_0,h_1))$ найдется такое $\delta\in(0,1)$, что для любых $t\in[0,\delta)$ выполнены неравенства 
\begin{equation}\label{ineq_det_A_0}
    \det A(t)\ge (h_0-\varepsilon)t^{\nu_0},
\end{equation}
\begin{equation}\label{ineq_det_A_1}
    \det A(1-t)\ge (h_1-\varepsilon)t^{\nu_1}.
\end{equation}
Уменьшая $\delta$, если нужно, мы получим, что для любого $c\in(0,\delta)$
\begin{equation*}
    \min_{t\in[c,1-c]}\det A(t)=\min(\det A(c),\det A(1-c)).
\end{equation*}
Положим $M_0=\min(h_0,h_1)-\varepsilon$, $\nu=\max(\nu_0,\nu_1)$, тогда ввиду оценок \eqref{ineq_det_A_0}---\eqref{ineq_det_A_1} получим, возможно, уменьшая $\delta$, что для любых $c\in(0,\delta)$
\begin{equation}\label{ineq_det_A_M_0_c_nu}
    \min_{t\in[c,1-c]}\det A(t)\ge M_0 c^{\nu}.
\end{equation}

Элементы матрицы $A^{-1}(t)$ вычисляются как отношения
соответствующих алгебраических дополнений в матрице $A(t)$ к
определителю матрицы $A(t)$, поэтому из неравенства \eqref{ineq_det_A_M_0_c_nu} следует, что  
существует такое число
$M_1>0$, зависящее только от матрицы $A(t)$, что абсолютные величины
элементов матрицы $A^{-1}(t)$ не превосходят $M_1/c$
для $c\in(0,\delta)$ и $t\in[c,1-c]$.

Таким образом, из оценки \eqref{ineq_det_A_M_0_c_nu} следует, что существует такое число $M>0$, зависящее только от $A(t)$, что
$$
\mu(A(t))=\frac{1}{\|A(t)^{-1}\|}\geq Mc^{\nu},\quad t\in[c,1-c],
$$
для любых $c\in(0,\delta)$.

Тогда
\begin{equation}
\label{Mc001}
\int_c^{1-c}|A(t)r(t)|\;dt\geq Mc^{\nu}
\int_c^{1-c}|r(t)|\;dt
\end{equation}
для $c\in(0,\delta)$.

По нашему предположению,
$$
|r(t)|\leq \|r\|_{0,1}\leq\frac{1}{\alpha}\|r\|_{i,0,1},\quad t\in[0,1].
$$

Уменьшая $\delta$, если нужно, мы получаем из условий \eqref{strong_cond_Ar_0}---\eqref{strong_cond_Ar_1}, что существует такое $N>0$, что
\begin{equation}
\label{T22}
\int_0^{c}|A(t)r(t)|\;dt\leq \frac{Nc^{\nu+1}}{\alpha}\|r\|_{i,0,1},
\end{equation}
\begin{equation}
\label{T23}
\int_{1-c}^{1}|A(t)r(t)|\;dt\leq \frac{Nc^{\nu+1}}{\alpha}\|r\|_{i,0,1},
\end{equation}
для $c\in(0,\delta)$.

Очевидны неравенства
\begin{equation}
\label{rc11}
\int_0^{c}|r(t)|\;dt\leq \frac{c}{\alpha}\|r\|_{i,0,1},~\int_{1-c}^{1}|r(t)|\;dt\leq \frac{c}{\alpha}\|r\|_{i,0,1}
\end{equation}
для $c\in(0,\delta)$.

Следовательно, используя неравенства (\ref{Mc001}), (\ref{T22}) и (\ref{T23}),
мы видим, что
$$
\int_0^{1}|A(t)r(t)|\;dt\geq Mc^{\nu}\int_c^{1-c}|r(t)|\;dt-
\frac{2c^{\nu+1}N}{\alpha}\|r\|_{i,0,1},
$$
а из неравенств (\ref{rc11}) мы получаем оценку
$$
\int_0^{1}|A(t)r(t)|\;dt\geq Mc^{\nu}\left(\|r\|_{i,0,1}- \frac{2c}
{\alpha}\|r\|_{i,0,1}\right)-\frac{2c^{\nu+1}N}{\alpha}\|r\|_{i,0,1}.
$$

Возьмем $c=\min\left(\frac{\alpha M}{4(M+N)},\delta\right)$, тогда правая часть последнего выражения не меньше, чем $\frac{Mc^{\nu}}{2}\|r\|_{i,0,1}$ и в качестве искомого $\lambda$ можно взять $\frac{Mc^{\nu}}{2}$. Случаи, когда выполнены условия 2 или 3, рассматриваются аналогично.

\end{proof}

\begin{proof}[{\bf Доказательство теоремы \ref{th_diff_K}}]
Сначала мы докажем, что существует такое $\delta>0$, что если $p(t)\in \cP_K$,
$$
    \Delta_px(\tau_k)=0,\quad k=1,\ldots,s+1
$$
и $\|\Delta p\|_{0,T}<\delta$, то $p(t)\equiv p_0(t)$ при $t\in[0,T]$. 

Пусть $p(t)\in\cP_K$, тогда для каждого фиксированного $k=0,1,\ldots,s$ функция $p(t)$ лежит в классе $K(\tau_k,\tau_{k+1})$. Из равенств
$$
    \Delta_px(\tau_k)=\Delta_px(\tau_{k+1})=0
$$
и представления \eqref{rep} следует равенство
\begin{equation}\label{22q}
    Y_{\tau_k}(\tau_{k+1})\Psi_{\tau_k,\tau_{k+1}}(\Delta p)=-G_{\tau_k,\tau_{k+1}}(\Delta p).
\end{equation}

Так как матрица
$$
    \frac{\partial f}{\partial x}(t,x(t,\{p_0\}),p_0(t))
$$
ограничена на $[0,T]$, существует такое число $\nu_k>0$, что для 
мининормы $\mu(Y_{\tau_k}(\tau_{k+1}))$ выполнено неравенство
$$
    \mu(Y_{\tau_k}(\tau_{k+1}))\geq \nu_k.
$$

Поэтому
$$
    |Y_{\tau_k}(\tau_{k+1})\Psi_{\tau_k,\tau_{k+1}}(\Delta p)|\geq\nu_k
    |\Psi_{\tau_k,\tau_{k+1}}(\Delta p)|.
$$

Применяя условие \eqref{Psi_delta_p_beta}, мы приходим  к оценке
\begin{equation*}
    |Y_{\tau_k}(\tau_{k+1})\Psi_{\tau_k,\tau_{k+1}}(\Delta p)|
    \geq\beta_k\nu_k
    \|\cD\Delta p\|_{i,\tau_k,\tau_{k+1}}.
\end{equation*}

Из определения класса $K(\tau_k,\tau_{k+1})$ и леммы \ref{lemma_norm_ineq_H} следует, что найдется такое число $\lambda_k>0$, что 
\[
    \|\cD\Delta p\|_{i,\tau_k,\tau_{k+1}}
    \ge\lambda_k\|\Delta p\|_{i,\tau_k,\tau_{k+1}}.
\]
Учитывая последнее неравенство и неравенство \eqref{delta_p_alpha}, мы получаем неравенство
\begin{equation}\label{25q}
    |Y_{\tau_k}(\tau_{k+1})\Psi_{\tau_k,\tau_{k+1}}(\Delta p)|
    \geq\alpha_k\lambda_k\beta_k\nu_k\|\Delta p\|_{\tau_k,\tau_{k+1}}.
\end{equation}

В то же время из оценки \eqref{sm} следует, что по произвольному $\epsilon_k>0$ можно найти такое $\delta_k>0$, что если $\|\Delta p\|_{\tau_{k},\tau_{k+1}}<\delta_k$, то
$$
    |G_{\tau_{k},\tau_{k+1}}(\Delta p)|\leq
    \epsilon_k\left\|\Delta p
    \right\|_{\tau_{k},\tau_{k+1}}.
$$

Сопоставляя эту оценку с равенством \eqref{22q} и оценкой \eqref{25q}, мы видим, что если выбрать $\epsilon_k$ достаточно малым, то $\Delta p(t)\equiv 0$ на промежутке $[\tau_{k},\tau_{k+1}]$. В качестве искомого $\delta$ из начала доказательства мы берем $\min_{k\in\{0,\ldots,s-1\}}\delta_k$ и получаем требуемое.

Теперь докажем локальную параметрическую идентифицируемость в смысле оп\-ре\-де\-ле\-ния \ref{def_loc_ident_func}. Предположим противное, тогда для $\varepsilon=\delta$ найдется такое $p_{\delta}(t)\in\cP_K$, что $0<\|p_{\delta}-p_0\|_{0,T}<\delta$ и $x(\tau_k,\{p_\delta\})=x(\tau_k,\{p_0\})$ для всех $k=1,2,\ldots,s$. Но по доказанному выше мы получаем, что $p\equiv p_0$ на промежутке $[0,T]$, что противоречит выбору $p_{\delta}$, для которого $\|p_{\delta}-p_0\|_{0,T}>0$.

\end{proof}

\vskip2mm
{\bf 3. Количество параметров не больше количества уравнений.}
В этом разделе мы будем считать, что $l\leq n$, т.~е. размерность параметра $p$ не больше размерности решения $x$. Пространства $\R^n$ и $\R^l$, которым принадлежат значения решения $x$ и параметра $p$, соответственно, мы рассматриваем как гильбертовы, снабженные стандартным скалярным произведением и евклидовой нормой. Как и в предыдущем разделе, мы считаем, что матрица
$$
    \frac{\partial f}{\partial p}(t,x,p)
$$
класса $C^1$ в области $D$. Обозначим
$$
    \cD(t)=\frac{\partial f}{\partial p}(t,x(t,\{p_0\}),p_0(t)),
$$
$$
    \cB(t)=\cD(t)^T\cD(t).
$$ 

Матрица $\cB(t)$ размера $l\times l$ положительно полуопределена (неотрицательно определена) для любого $t$, т.~е. для любого вектора $v\in\R^l$ верно неравенство $v^T\cB(t)v=|\cD(t)v|^2\ge0$. Известно, что определитель матрицы равен произведению ее собственных чисел, взятых с учетом кратности. Так как $\cB(t)$ неотрицательно определена, то все ее собственные числа неотрицательны для любого $t$ и $\det \cB(t)\ge0$. 

\begin{definition}\label{classH}
    Будем говорить, что система \eqref{1} принадлежит классу $\cH(p_0)$, если функция $\det \cB(t)$ имеет нули только второго порядка, т.~е. из равенства 
    \begin{equation}\label{B_zero}
        \det \cB(\tau)=0
    \end{equation}
    при некотором $\tau\in[0,T]$ следует, что найдется такое положительное $h_{\tau}>0$, что
    \begin{equation}\label{B_asimptotic}
        \det \cB(\tau+t)=h^2_\tau t^2+o(t^2),\quad t\to 0.
    \end{equation}
\end{definition}

\begin{remark}
    Как будет показано ниже, класс $\cH(p_0)$ является аналогом класса $\cF(p_0)$, определенного в статье \cite{Pilyugin2023Conditions} как класс систем \eqref{1} с $n=l$, для которых функция $\det \cD(t)$ имеет только простые нули. Так как в случае $l\leq n$ матрица $\cD(t)$ не обязательно квадратная и у нее может не быть определителя, то мы вводим квадратную матрицу $\cB(t)=\cD(t)^T\cD(t)$. Если матрица $\cD(t)$ квадратная, то $\det\cB(t)=(\det\cD(t))^2$, и аналогия между классами $\cH(p_0)$ и $\cF(p_0)$ прямая. Заметим также, что ядра матриц $\cD(t)$ и $\cB(t)$ совпадают для любого $t$.
\end{remark}

Множество точек $\tau$, для которых выполнены условия \eqref{B_zero}---\eqref{B_asimptotic}, конечно. Обозначим через $0<\tau_1<\ldots<\tau_s<T$ точки интервала $(0,T)$, удовлетворяющие условиям \eqref{B_zero}---\eqref{B_asimptotic}. Положим $\tau_0=0$, $\tau_{s+1}=T$ и 
\begin{equation}\label{theta_new}
    \Theta(p_0)=\{\tau_0,\dots,\tau_{s+1}\}.
\end{equation}

Введем классы $H(\tau_k,\tau_{k+1})$ параметр-функций, зависящих от промежутка ${[\tau_k,\tau_{k+1}]}$, $k=0,1,\ldots,s$. Зафиксируем $k=0,1,\ldots,s$, обозначим $\tau=\tau_k$ и $\theta=\tau_{k+1}$. Будем говорить, что функция $p(t)\in C^1({\cal I})$ принадлежит классу $H(\tau,\theta)$, если, во-пер\-вых, существуют такие числа $\alpha>0$, $\beta>0$, зависящие от промежутка $[\tau,\theta]$, что
\begin{equation*}
    \|\Delta p\|_{i,\tau,\theta}
    \geq\alpha\|\Delta p\|_{\tau,\theta},
\end{equation*}
\begin{equation*}
    |\Psi_{\tau,\theta}(\Delta p)|
    \geq\beta\|\cD\Delta p\|_{i,\tau,\theta},
\end{equation*}
а во-вторых, выполняется одно из следующих четырех условий:
\begin{enumerate}
    \item[($H_1$)] $\det\cB (\tau)=\det\cB (\theta)=0$ и существуют такие $\gamma>0$ и $\kappa>0$, зависящие от промежутка $[\tau,\theta]$, что
    $$
        |\cD(t)\Delta p(t)|\le \kappa(t-\tau)\|\Delta p\|_{\tau,\theta},\quad t\in[\tau,\tau+\gamma),
    $$
    $$
        |\cD(t)\Delta p(t)|\le \kappa(\theta-t)\|\Delta p\|_{\tau,\theta},\quad t\in(\theta-\gamma,\theta].
    $$
    \item[($H_2$)] $\tau=0$, $\det\cB (\tau)\neq 0$, $\det\cB (\theta)=0$ и существуют такие $\gamma>0$ и $\kappa>0$, зависящие от промежутка $[\tau,\theta]$, что
    $$
        |\cD(t)\Delta p(t)|\le \kappa(\theta-t)\|\Delta p\|_{\tau,\theta},\quad t\in(\theta-\gamma,\theta].
    $$
    \item[($H_3$)] $\theta=T$, $\det\cB (\tau)= 0$, $\det\cB (\theta)\neq 0$ и существуют такие $\gamma>0$ и $\kappa>0$, зависящие от промежутка $[\tau,\theta]$, что
    $$
        |\cD(t)\Delta p(t)|\le \kappa(t-\tau)\|\Delta p\|_{\tau,\theta},\quad t\in[\tau,\tau+\gamma),.
    $$
    \item[($H_4$)] $\tau=0$, $\theta=T$, $\det\cB (\tau)\neq 0$ и $\det\cB(\theta)\neq 0$.
\end{enumerate}

\begin{remark}
    Классы $H(\tau_k,\tau_{k+1})$ невырожденные и бесконечномерные, обоснование чего аналогично рассуждению для классов $K(\tau_k,\tau_{k+1})$, определенных в предыдущем разделе.
\end{remark}

\begin{theorem}\label{th_diff_C}
    Предположим, что система \eqref{1} принадлежит классу $\cH(p_0)$ с множеством $\Theta(p_0)$, имеющим вид \eqref{theta_new}, тогда система \eqref{1} локально параметрически идентифицируема в классе 
    $$
        \cP_H=\bigcap_{k=0}^{s}H(\tau_k,\tau_{k+1})
    $$
    при параметре $p_0\in\cP_H$ по наблюдениям на множестве $\Theta(p_0)$.
\end{theorem}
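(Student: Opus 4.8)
Доказательство я планирую построить полностью по образцу доказательства теоремы~\ref{th_diff_K}: единственное место, где квадратность матрицы $\cD(t)$ была существенной, — это техническая лемма~\ref{lemma_norm_ineq_H}, поэтому основная работа состоит в том, чтобы получить её аналог для прямоугольной матрицы $\cD(t)$ размера $n\times l$, а затем дословно воспроизвести остальное рассуждение. Сначала я бы доказал такой аналог леммы~\ref{lemma_norm_ineq_H}: если $\cD(t)$ — $C^1$-гладкая $n\times l$ матрица на $[a,b]$, для которой $\det\cB(t)\neq0$ при $t\in(a,b)$ (где $\cB=\cD^T\cD$), а в тех концах отрезка, где $\det\cB$ обращается в нуль, выполнена асимптотика \eqref{B_asimptotic} порядка~$2$, и если непрерывная функция $r(t)$ удовлетворяет неравенству $\|r\|_{i,a,b}\ge\alpha\|r\|_{a,b}$ и одному из условий $(H_1)$--$(H_4)$ (с $r$ вместо $\Delta p$), то найдётся такое $\lambda>0$, зависящее лишь от $\cD$, $\alpha$, $\gamma$, $\kappa$, что $\|\cD r\|_{i,a,b}\ge\lambda\|r\|_{i,a,b}$. Главный шаг здесь — оценка снизу мининормы $\mu(\cD(t))=\min_{|v|=1}|\cD(t)v|$, то есть наименьшего сингулярного числа матрицы $\cD(t)$. Я воспользуюсь тем, что $\mu(\cD(t))^2=\lambda_{\min}(\cB(t))$ по теореме Рэлея, где $\lambda_{\min}(\cB(t))$ — наименьшее собственное число неотрицательно определённой матрицы $\cB(t)$, и что
$$
    \det\cB(t)=\lambda_{\min}(\cB(t))\prod_{i}\lambda_i(t),
$$
где произведение берётся по остальным $l-1$ собственным числам $\lambda_i(t)$ матрицы $\cB(t)$.

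Поскольку все собственные числа $\cB(t)$ ограничены сверху в силу непрерывности $\cB$ на компакте, произведение $\prod_{i}\lambda_i(t)$ не превосходит некоторой константы $C$, зависящей только от $\cD$, откуда $\lambda_{\min}(\cB(t))\ge\det\cB(t)/C$. Нормируя $[a,b]=[0,1]$ и используя асимптотику \eqref{B_asimptotic} (порядок нуля $\det\cB$ равен~$2$), я получаю, как в лемме~\ref{lemma_norm_ineq_H}, оценку $\det\cB(t)\ge M_0 c^2$ при $t\in[c,1-c]$, а значит $\mu(\cD(t))=\sqrt{\lambda_{\min}(\cB(t))}\ge Mc$. Это в точности оценка мининормы из доказательства леммы~\ref{lemma_norm_ineq_H} с показателем $\nu=1$; именно поэтому в определении классов $H(\tau,\theta)$ в правых частях стоит линейный множитель $(t-\tau)$, а не $(t-\tau)^\nu$. После этого разбиение $\int_0^1=\int_0^c+\int_c^{1-c}+\int_{1-c}^1$, оценки концевых интегралов через условия $(H_1)$--$(H_4)$ и выбор $c$ дословно повторяют доказательство леммы~\ref{lemma_norm_ineq_H} при $\nu=1$ и дают требуемое неравенство.

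Имея этот аналог леммы, я воспроизвожу схему доказательства теоремы~\ref{th_diff_K}. Сначала покажу, что существует такое $\delta>0$, что из условий $p\in\cP_H$, $\Delta_p x(\tau_k)=0$ для всех $k=1,\dots,s+1$ и $\|\Delta p\|_{0,T}<\delta$ следует $p\equiv p_0$. Для каждого $k$ из равенств $\Delta_p x(\tau_k)=\Delta_p x(\tau_{k+1})=0$ и представления \eqref{rep} получаю $Y_{\tau_k}(\tau_{k+1})\Psi_{\tau_k,\tau_{k+1}}(\Delta p)=-G_{\tau_k,\tau_{k+1}}(\Delta p)$, после чего оцениваю левую часть снизу последовательно: через мининорму матрицы $Y_{\tau_k}(\tau_{k+1})$, через неравенство $|\Psi_{\tau_k,\tau_{k+1}}(\Delta p)|\ge\beta\|\cD\Delta p\|_{i,\tau_k,\tau_{k+1}}$, через доказанный аналог леммы $\|\cD\Delta p\|_{i,\tau_k,\tau_{k+1}}\ge\lambda_k\|\Delta p\|_{i,\tau_k,\tau_{k+1}}$ и, наконец, через неравенство $\|\Delta p\|_{i,\tau_k,\tau_{k+1}}\ge\alpha_k\|\Delta p\|_{\tau_k,\tau_{k+1}}$. Сопоставляя полученную оценку снизу с оценкой малости \eqref{sm} для $G_{\tau_k,\tau_{k+1}}$ и выбирая $\delta$ достаточно малым, заключаю, что $\Delta p\equiv0$ на $[\tau_k,\tau_{k+1}]$; объединяя по всем $k$, получаю $p\equiv p_0$ на $[0,T]$. Локальную идентифицируемость в смысле определения~\ref{def_loc_ident_func} вывожу отсюда от противного точно так же, как в теореме~\ref{th_diff_K}.

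Основной трудностью я считаю именно обоснование ключевой леммы в прямоугольном случае — перенос информации о порядке нуля определителя $\det\cB$ на оценку наименьшего сингулярного числа $\mu(\cD)$. Снимающее эту трудность наблюдение состоит в том, что $\mu(\cD)^2=\lambda_{\min}(\cB)$, а отношение $\det\cB/\lambda_{\min}(\cB)$, равное произведению прочих собственных чисел, равномерно по $t$ ограничено сверху; это позволяет превратить асимптотику порядка~$2$ для $\det\cB$ в линейную по $(t-\tau)$ оценку для $\mu(\cD)$, после чего всё остальное воспроизводит доказательства леммы~\ref{lemma_norm_ineq_H} и теоремы~\ref{th_diff_K}.
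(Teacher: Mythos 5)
Ваше предложение корректно и воспроизводит общую архитектуру доказательства из статьи (сведение к неравенству $\|\cD r\|_{i,a,b}\ge\lambda\|r\|_{i,a,b}$ на каждом из промежутков $[\tau_k,\tau_{k+1}]$ с последующим дословным повторением доказательства теоремы~\ref{th_diff_K}), однако ключевую лемму вы доказываете существенно иначе, чем в статье. В статье сначала доказывается отдельная лемма~\ref{lemma_mu_diff} о том, что функция $\mu A(t)=\sqrt{\lambda_{\min}(B(t))}$ непрерывно дифференцируема вблизи конца отрезка, где $\det B$ обращается в нуль, с односторонней производной $\pm h_c/\sqrt{\Lambda_c}$; для этого устанавливается простота нулевого собственного числа матрицы $B(c)$, привлекаются теорема Като о $C^1$-гладких семействах собственных чисел и гладкое блочное разложение Шура по Dieci--Eirola, позволяющие записать $\mu A(t)=t\,|w_0(t)|$ с непрерывной и не обращающейся в нуль функцией $w_0$. Вы обходите всё это элементарным наблюдением: $\det\cB(t)$ равен произведению $\lambda_{\min}(\cB(t))$ на произведение остальных $l-1$ собственных чисел, которое равномерно по $t$ ограничено сверху константой $C$ (в силу непрерывности $\cB$ на компакте), откуда $\mu(\cD(t))^2=\lambda_{\min}(\cB(t))\ge\det\cB(t)/C\ge M_0c^2/C$ на $[c,1-c]$. Это даёт ту же оценку $\mu(\cD(t))\ge Mc$, которую статья извлекает из дифференцируемости и монотонности $\mu A$, лишь с, возможно, худшей константой, что для леммы несущественно. Ваш путь короче и полностью избегает аппарата гладких спектральных разложений; путь статьи даёт более сильный структурный факт --- $C^1$-гладкость $\mu A$ в вырожденных концах с явной производной, --- который представляет самостоятельный интерес, но для теоремы не нужен. Остальная часть вашего рассуждения (цепочка оценок снизу через $\mu(Y_{\tau_k}(\tau_{k+1}))$, неравенства \eqref{Psi_delta_p_beta}, доказанную лемму и \eqref{delta_p_alpha} с последующим сопоставлением с \eqref{sm}) совпадает с авторской.
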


Для доказательства теоремы \ref{th_diff_C} нам понадобятся некоторые технические построения. Пусть $A$ --- матрица размера $n\times m$.
Как и раньше через $\mu (A)$ мы обозначаем мининорму матрицы $A$:
\[
    \mu(A)=\min\limits_{|v|=1}|Av|,
\]
если $m>n$ или $\ker A\neq\{0\}$, то очевидно $\mu(A)=0$. Так как норма вектора евклидова, то несложно получить другое выражение для мининормы:
\begin{equation}\label{mu_formula}
    \mu(A)=\min\limits_{|v|=1}\sqrt{v^TA^TAv}
    =\min_{1\le i\le m}\sqrt{\lambda_i(A^TA)},
\end{equation}
где $\lambda_i(A^TA)$, $i=1,\ldots,m$ --- собственные числа неотрицательно определенной матрицы $A^TA$.

Число $\sigma\ge0$ называется сингулярным числом матрицы $A$, если найдутся такие единичные вектора $u\in\R^n$, $v\in\R^m$, что 
\[
    Av=\sigma u,
\]
\[
    A^Tu=\sigma v.
\]
Векторы $u,v$ называются, соответственно, левым и правым сингулярными векторами. Сингулярным разложением матрицы $A$ называется разложение вида
\begin{equation*}
    A=U\Sigma V^T,
\end{equation*}
где $U, V$ --- ортогональные матрицы размера $n\times n$ и $m\times m$, соответственно, $\Sigma$ --- матрица размера $n\times m$, на главной диагонали которой стоят сингулярные числа матрицы $A$, а остальные элементы равны нулю. Количество сингулярных чисел у матрицы $A$ с учетом кратности равно $\min(n,m)$, притом количество ненулевых сингулярных чисел равно рангу матрицы $A$. Ненулевые сингулярные числа в точности равны квадратным корням из ненулевых собственных чисел матрицы $A^TA$ или $AA^T$ (см. подробнее \cite[гл. 3]{Horn}).

Таким образом, если матрица $A$ размера $n\times l$, $l\leq n$, то матрица $A$ имеет ровно $l$ сингулярных чисел, и количество нулевых сингулярных чисел равно $l-\rank A=\dim\ker A$. Если $\sigma_i(A)$, $i=1\ldots,l$ --- сингулярные числа матрицы $A$, то согласно \eqref{mu_formula}
\[
    \mu(A)=\min_{1\leq i\leq l}\sigma_i(A).
\]

Если $A(t)$ --- матрица класса $C([a,b])$ размера $n\times l$, $l\le n$, то функция 
$$
    \mu A(t)=\min\limits_{|v|=1}|A(t)v|
$$ 
непрерывна на отрезке $[a,b]$, что следует из соотношения \eqref{mu_formula} и того факта, что для непрерывной мат\-риц-функ\-ции $A(t)$ существуют непрерывные функции $\lambda_i(t)$, $i=1,\ldots,l$, которые для каждого $t\in[a,b]$ являются собственными числами матрицы $A(t)^TA(t)$ (см. теоремы 5.1, 5.2 в \cite[гл. 2]{Kato}).

\begin{lemma}\label{lemma_mu_diff}
    Пусть $A(t)$ --- матрица класса $C^1([a,b])$ размера $n\times l$, $l\leq n$. Обозначим $B(t)=A(t)^TA(t)$. Пусть
    $$
        \det B(t)\neq 0,\quad t\in(a,b),
    $$
    и если $\det B(c)=0$, при некотором $c\in\{a,b\}$, то найдется такое $h_c>0$, что
    \begin{equation}\label{asimptotic_detB}
        \det B(t+c)=h_c^2t^2+o(t^2),\quad |t|\to0.
    \end{equation}
    
    Тогда если $\det B(c)=0$, $c\in\{a,b\}$, то функция
    \[
        \mu A(t)=\min_{|v|=1}|A(t)v|
    \]
    непрерывно дифференцируема в точке $t=c$ и в некоторой ее окрестности. Притом
    \begin{equation*}\label{diff_mu}
    \begin{split}
        &\frac{d (\mu A)}{dt}(a+0)=\frac{h_a}{\sqrt{\Lambda_a}},\mbox{ если }\det B(a)=0,\\&
        \frac{d (\mu A)}{dt}(b-0)=-\frac{h_b}{\sqrt{\Lambda_b}},\mbox{ если }\det B(b)=0,
    \end{split}
    \end{equation*}
    где $\Lambda_c$, $c\in\{a,b\}$ --- произведение ненулевых собственных чисел матрицы $B(c)$. 
\end{lemma}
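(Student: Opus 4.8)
The plan is to reduce everything to the least singular value of $A(t)$ and to a single right singular vector. By \eqref{mu_formula} one has $\mu A(t)=\sqrt{\lambda_{\min}(B(t))}$, where $\lambda_{\min}(B(t))$ is the smallest eigenvalue of the symmetric positive semidefinite $C^1$ matrix $B(t)=A(t)^TA(t)$. Since $\det B(t)=\prod_i\lambda_i(B(t))$ and $\det B\ne0$ on $(a,b)$, the only eigenvalue that can vanish at an endpoint $c$ is $\lambda_{\min}$, and $\det B(c)=0$ means $\lambda_{\min}(B(c))=0$, i.e. $\ker A(c)=\ker B(c)\ne\{0\}$ (the kernels coincide because $B v=0\Leftrightarrow|Av|^2=0$). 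The whole lemma hinges on showing that this kernel is one–dimensional and on extracting the sharp leading term of $\lambda_{\min}(B(c+t))$.

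The hard part — and the step I would carry out first — is to prove that $\dim\ker A(c)=1$. Put $d=\dim\ker A(c)$. Because $A$ is $C^1$ and $A(c)v=0$ for $v\in\ker A(c)$, Taylor expansion gives $|A(c+t)v|=|A'(c)v|\,|t|+o(t)=O(|t|)$ uniformly over unit $v\in\ker A(c)$. Applying the Courant--Fischer min--max characterization of singular values with the $d$–dimensional test space $\ker A(c)$ shows that the $d$ smallest singular values of $A(c+t)$ are $O(|t|)$, hence the $d$ smallest eigenvalues of $B(c+t)$ are $O(t^2)$; meanwhile the remaining $l-d$ eigenvalues converge to the nonzero eigenvalues of $B(c)$, whose product is $\Lambda_c>0$. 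Therefore
\[
\det B(c+t)=\bigl(\Lambda_c+o(1)\bigr)\,O(t^{2d}).
\]
If $d\ge2$ this is $O(t^{4})=o(t^{2})$, contradicting $\det B(c+t)=h_c^2t^2+o(t^2)$ with $h_c>0$; hence $d=1$. Feeding $d=1$ back into the same factorization yields simultaneously
\[
\lambda_{\min}(B(c+t))=\frac{h_c^{2}}{\Lambda_c}\,t^{2}+o(t^{2}).
\]
I expect this Courant--Fischer argument to be the crux: the key observation is that $C^1$-regularity of $A$ itself (not merely of $B$) forces the vanishing singular values to be $O(|t|)$ rather than $O(\sqrt{|t|})$, which is exactly what rules out a higher–dimensional kernel — a two–dimensional kernel would make $\mu A$ behave like $\sqrt{|t|}$ and destroy differentiability.

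Once $d=1$ is established the conclusion follows cleanly. Near $c$ the eigenvalue $\lambda_{\min}(B(t))$ is simple and isolated, so by standard perturbation theory for $C^1$ symmetric families (\cite[гл. 2]{Kato}) one can choose a $C^1$ unit eigenvector $v(t)$ of $B(t)$, that is, a right singular vector of $A(t)$ for its least singular value. Setting $w(t)=A(t)v(t)\in\R^n$ gives a $C^1$ vector field with $|w(t)|^2=v(t)^TB(t)v(t)=\lambda_{\min}(B(t))$, so that $\mu A(t)=|w(t)|$ and $w(c)=0$. From the leading term above, $|w(c+t)|^2=\tfrac{h_c^2}{\Lambda_c}t^2+o(t^2)$, whence $|w'(c)|=h_c/\sqrt{\Lambda_c}\ne0$. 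On the one–sided neighborhood of $c$ inside $[a,b]$ the field $w$ is nonzero off $c$, so $|w|$ is $C^1$ there with $\frac{d}{dt}|w(t)|=\dfrac{w(t)\cdot w'(t)}{|w(t)|}$; letting $t\to c$ and using $w(c+t)=w'(c)t+o(t)$ shows this quantity tends to $\pm|w'(c)|$, which also matches the one–sided difference quotient of $\mu A$ at $c$. The sign is $+$ when $c=a$ (approach from the right) and $-$ when $c=b$ (approach from the left), giving exactly $\frac{d(\mu A)}{dt}(a+0)=h_a/\sqrt{\Lambda_a}$ and $\frac{d(\mu A)}{dt}(b-0)=-h_b/\sqrt{\Lambda_b}$, and confirming that the derivative of $\mu A$ is continuous up to $c$.
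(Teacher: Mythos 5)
Your proof is correct and follows the same overall skeleton as the paper's: reduce $\mu A(t)$ to $\sqrt{\lambda_{\min}(B(t))}$ via \eqref{mu_formula}, show the zero eigenvalue of $B(c)$ is simple, choose a $C^1$ unit eigenvector $v(t)$ for the smallest eigenvalue near $c$, put $w(t)=A(t)v(t)$ so that $\mu A(t)=|w(t)|$ and $w(c)=0$, and read $|w'(c)|=h_c/\sqrt{\Lambda_c}$ off the asymptotics \eqref{asimptotic_detB}. You diverge in the two key technical steps. For simplicity of the zero eigenvalue, the paper takes $C^1$ eigenvalue branches of $B(t)$ (via Kato's theorem) and argues that two nonnegative branches vanishing at $c$ would have zero derivative there, forcing $\det B(c+t)=o(t^2)$; you instead apply Courant--Fischer with the test space $\ker A(c)$ and Taylor-expand $A$, so that the $d$ smallest eigenvalues are $O(t^2)$. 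Your variant is the more robust one: at an endpoint $c\in\{a,b\}$ the inference (nonnegative, $C^1$, vanishing at $c$, hence zero one-sided derivative at $c$) is false in general --- consider $\lambda(t)=t$ on $[0,1]$ --- so the paper's step tacitly requires a $C^1$ extension of $A$ beyond $[a,b]$, or precisely your argument; you also correctly identify that it is the smoothness of $A$, not merely of $B$, that rules out a higher-dimensional kernel. For the concluding differentiability, the paper cites two propositions from \cite{SmoothDecompos} (a $C^1$ block Schur decomposition of $B(t)$, and the fact that $t|w_0(t)|$ is $C^1$ when $w_0$ is continuous, nonvanishing, and $tw_0(t)\in C^1$), while you use standard perturbation theory \cite{Kato} for a simple isolated eigenvalue and finish with an elementary computation of $\frac{d}{dt}|w|=w\cdot w'/|w|$ off $c$, matched against the one-sided difference quotient at $c$. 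Your route is more self-contained and elementary; the paper's is shorter because it outsources these calculus facts to the smooth-decomposition machinery it invokes anyway.
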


\begin{Proof}
    Пусть $[a,b]=[0,1]$. Так как матрица $B(t)$ класса $C^1$ и симметрична, то по теореме 6.8 в \cite[гл. 2]{Kato} на отрезке $[0,1]$ существуют непрерывно дифференцируемые функции $\lambda_1(t),\ldots,\lambda_l(t)$, являющиеся для каждого $t$ (не обязательно упорядоченным) набором собственных чисел матрицы $B(t)$. Тогда 
    \begin{equation*}\label{detB_lambdi}
        \det B(t)=\lambda_1(t)\cdot\ldots\cdot\lambda_l(t).
    \end{equation*}

    Рассмотрим точку $t=0$, рассуждение для точки $t=1$ аналогичное. Пусть $\det B(0)=0$. Предположим, что у матрицы $B(0)$ нулевое собственное число имеет кратность как минимум два. Тогда можно считать, что $\lambda_1(0)=\lambda_2(0)=0$. Так как собственные числа матрицы $B(t)$ неотрицательны, и функции $\lambda_1, \lambda_2$ непрерывно дифференцируемы, то $\lambda_1'(0)=\lambda'_2(0)=0$. Значит, $\lambda_1(t)=o(t)$ и $\lambda_2(t)=o(t)$ при $t\to0+$. Тогда
    \[
        \det B(t)=o(t^2), \quad t\to0+,
    \]
    что противоречит соотношению $\det B(t)=h_0^2t^2+o(t^2)$, $t\to0+$, где $h_0>0$. Следовательно, нулевое собственное число матрицы $B(0)$ простое. 
    
    Пусть $\lambda_1(0)=0$ и $\lambda_i(0)>0$, $i=2,\ldots,l$. Найдется такая окрестность $U$ точки $0$ в отрезке $[0,1]$, что $0\in U\subset[0,1]$ и $\lambda_1(t)<\lambda_i(t)$ для любых $t\in U$ и $i=2,\ldots,l$. Множества $\{\lambda_1(t)\}$, $\{\lambda_2(t),\ldots,\lambda_l(t)\}$ не пересекаются для любого $t\in U$. Тогда по предложению 2.6 из \cite{SmoothDecompos} в окрестности $U$ существует $C^1$-гладкое блочное разложение Шура симметричной матрицы $B(t)$, а именно в окрестности $U$ существуют такие $C^1$-гладкие: ортогональная матрица $Q(t)$ размера $l\times l$, симметричная матрица $\Lambda(t)$ размера $(l-1)\times (l-1)$ и скалярная функция $\lambda(t)$, что выполнено соотношение
    \[
        B(t)=Q(t)\begin{pmatrix}
            \lambda(t) & 0 \\
            0 & \Lambda(t)
        \end{pmatrix}
        Q(t)^T,
    \]
    притом собственные числа матрицы $\Lambda(t)$ не совпадают с $\lambda(t)$ при любых $t\in U$, и $\lambda(0)=0$, $\Lambda_0=\det\Lambda(0)>0$. В окрестности $U$ функция $\lambda(t)$ есть наименьшее собственное значение матрицы $B(t)$. Далее в доказательстве везде предполагается, что $t\in U$.

    Ясно, что если $Q(t)=(q(t),~P(t))$, где $q(t)$ размера $l\times 1$, $P(t)$ размера $l\times (l-1)$, то $B(t)q(t)=\lambda(t)q(t)$, так как матрица $Q(t)$ ортогональная, и, значит,
    \begin{equation}\label{lambda_Bq}
        \lambda(t)=q(t)^TB(t)q(t).
    \end{equation}
   Так как $\lambda(t)$ есть наименьшее собственное число матрицы $B(t)$, из соотношения \eqref{mu_formula}, мы получаем, что
    \begin{equation}\label{sqrtlambda}
        \mu A(t)=\sqrt{\lambda(t)}.
    \end{equation}
    Обозначим $w(t)=A(t)q(t)$, тогда ввиду \eqref{lambda_Bq}
    \begin{equation}\label{sqrtlambdaW}
        \sqrt{\lambda(t)}=|w(t)|.
    \end{equation}
    Так как выполнено \eqref{asimptotic_detB}, то при $t\to0+$
    \[
        \frac{\sqrt{\det B(t)}-h_0t}{h_0t}
        =\sqrt{\frac{h_0^2t^2+o(t^2)}{h_0^2t^2}}-1
        \to0;
    \]
    значит, $\sqrt{\det B(t)}=h_0t+o(t)$, $t\to0+$. Так как $\det B(t)=\lambda(t)\cdot \det\Lambda(t)$, то
    \[
        \sqrt{\lambda(t)}=|w(t)|=p_0t+o(t),\quad t\to0+,
    \]
    где $p_0=\frac{h_0}{\sqrt{\Lambda_0}}>0$. Так как вектор $w(t)$ класса $C^1$, то существует конечный предел $w'(0)=\lim_{t\to0+}\frac{w(t)}{t}$. Вектор-функцию $w_0(t)=\frac{w(t)}{t}$ можно доопределить в нуле до непрерывной функции. При этом $w_0(0)\neq0$, так как иначе $w'(0)=0$ и, значит, $w(t)=o(t)$ и $|w(t)|=o(t)$, $t\to0+$, что противоречит тому, что $p_0>0$. 
    
    Так как $w_0(t)$ непрерывна, $t w_0(t)=w(t)\in C^1$ и компоненты вектора $w_0(t)$ не равны нулю одновременно ни для какого $t$, то согласно предложению 3.4 из \cite{SmoothDecompos} функция $|w_0(t)|$ такова, что функция $t|w_0(t)|$ класса $C^1$. Согласно определению функции $w_0(t)$ и соотношениям \eqref{sqrtlambda}---\eqref{sqrtlambdaW}, мы имеем равенства
    \[
        t|w_0(t)|=|w(t)|=\sqrt{\lambda(t)}=\mu A(t);
    \]
    следовательно, функция $\mu A(t)$ непрерывно дифференцируема в нуле и в его окрестности, что и требовалось.  Правая производная функции $\mu A(t)$ в нуле равна
    \[
        p_0=\frac{h_0}{\sqrt{\Lambda_0}},
    \]
    где $\Lambda_0=\det\Lambda(0)$ является произведением ненулевых собственных чисел матрицы $B(0)$. Случай, когда $\det B(1)=0$, рассматривается аналогично. Лемма доказана.
    
\end{Proof}

В доказательстве леммы \ref{lemma_mu_diff} использовались методы, приведенные в доказательстве теоремы 3.6 из \cite{SmoothDecompos} о существовании гладкого сингулярного разложения матрицы, гладко зависящей от параметра.

\begin{lemma}\label{lemma_norm_ineq_2}
    Пусть выполнены условия леммы \ref{lemma_mu_diff}. Пусть $r(t)$ --- непрерывная функция на $[a,b]$,
удовлетворяющая неравенству 
$$
\|r\|_{i,a,b}\geq \alpha\|r\|_{a,b}
$$
с некоторым $\alpha>0$.

Предположим, что выполнено одно из следующих условий:

\begin{enumerate}
    \item[1)] $\det B(a)=\det B(b)=0$ и существуют такие $\gamma>0$ и $\kappa>0$, что
    \begin{equation*}\label{kerA1}
        |A(t)r(t)|\le \kappa(t-a)\|r\|_{a,b},\quad t\in[a,a+\gamma),
    \end{equation*}
    \begin{equation*}\label{kerA2}
        |A(t)r(t)|\le \kappa(b-t)\|r\|_{a,b},\quad t\in(b-\gamma,b];
    \end{equation*}
    \item[2)] $\det B(a)\neq 0$, $\det B (b)=0$ и существуют такие $\gamma>0$ и $\kappa>0$, что
    $$
        |A(t)r(t)|\le \kappa(b-t)\|r\|_{a,b},\quad t\in(b-\gamma,b];
    $$
    \item[3)] $\det B (a)= 0$, $\det B (b)\neq 0$ и существуют такие $\gamma>0$ и $\kappa>0$, что
    $$
        |A(t)r(t)|\le \kappa(t-a)\|r\|_{a,b},\quad t\in[a,a+\gamma);
    $$
    \item[4)] $\det B(a)\neq 0$ и $\det B(b)\neq 0$.
\end{enumerate}

    Тогда существует такое 
    $\lambda>0$, зависящее только от $A(t)$, $\alpha$, $\kappa$ и $\gamma$, что
    $$
        \|Ar\|_{i,a,b}\geq\lambda \|r\|_{i,a,b}.
    $$
\end{lemma}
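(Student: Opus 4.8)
The plan is to mirror the proof of Lemma \ref{lemma_norm_ineq_H} almost verbatim, with the single substantive change that the role of the lower bound on $\mu(A(t))$ — which there came from the adjugate/determinant formula available only for square matrices — is now supplied by Lemma \ref{lemma_mu_diff}. As before I would normalize $[a,b]=[0,1]$ and treat case 1 in detail, the remaining cases being analogous and simpler (fewer endpoints contribute). The exponent playing the role of $\nu$ in Lemma \ref{lemma_norm_ineq_H} is here exactly $\nu=1$: since $\det B(t)=\lambda_{\min}(t)\cdot\Lambda(t)$ with $\Lambda(t)\to\Lambda_0>0$, the hypothesis $\det B(t)=h_0^2t^2+o(t^2)$ forces $\mu A(t)=\sqrt{\lambda_{\min}(t)}\sim (h_0/\sqrt{\Lambda_0})\,t$, i.e. the mininorm vanishes linearly, matching the linear boundary bounds $|A(t)r(t)|\le\kappa(t-a)\|r\|_{a,b}$ in the statement.

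First I would establish the key mininorm estimate. By Lemma \ref{lemma_mu_diff}, in case 1 the function $\mu A(t)$ is continuously differentiable near $0$ and near $1$, with $\mu A(0)=\mu A(1)=0$ and one-sided derivatives $(\mu A)'(0+)=h_0/\sqrt{\Lambda_0}>0$ and $(\mu A)'(1-)=-h_1/\sqrt{\Lambda_1}<0$. Hence there are $\delta\in(0,1/2)$ and $M_1>0$ with $\mu A(t)\ge M_1 t$ on $[0,\delta]$ and $\mu A(t)\ge M_1(1-t)$ on $[1-\delta,1]$; on the compact middle $[\delta,1-\delta]$ the function $\mu A$ is continuous and strictly positive (since $\det B>0$ there forces the least eigenvalue of $B$ to be positive), so it is bounded below by a positive constant. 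Combining these, for every sufficiently small $c\in(0,\delta)$ I obtain a constant $M>0$ with $\mu A(t)\ge Mc$ for all $t\in[c,1-c]$. From $|A(t)r(t)|\ge\mu A(t)\,|r(t)|$ this gives
$$
\int_c^{1-c}|A(t)r(t)|\,dt\ge Mc\int_c^{1-c}|r(t)|\,dt.
$$

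Next I would handle the boundary layers and assemble the estimate exactly as in Lemma \ref{lemma_norm_ineq_H} with $\nu=1$. The hypothesis $\|r\|_{i,0,1}\ge\alpha\|r\|_{0,1}$ yields $|r(t)|\le\|r\|_{0,1}\le\tfrac1\alpha\|r\|_{i,0,1}$, whence $\int_0^c|r|\,dt\le\tfrac{c}{\alpha}\|r\|_{i,0,1}$ and likewise on $[1-c,1]$; therefore $\int_c^{1-c}|r|\,dt\ge\big(1-\tfrac{2c}{\alpha}\big)\|r\|_{i,0,1}$. Using the linear boundary bounds on $|Ar|$ together with the same estimate on $|r|$, the contributions $\int_0^c|Ar|\,dt$ and $\int_{1-c}^1|Ar|\,dt$ are $O(c^2)\|r\|_{i,0,1}$, say bounded by $\tfrac{Nc^2}{\alpha}\|r\|_{i,0,1}$ each for a constant $N>0$. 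Putting the pieces together,
$$
\|Ar\|_{i,0,1}\ge Mc\Big(1-\tfrac{2c}{\alpha}\Big)\|r\|_{i,0,1}-\tfrac{2Nc^2}{\alpha}\|r\|_{i,0,1},
$$
and choosing $c=\min\big(\tfrac{\alpha M}{4(M+N)},\delta\big)$ makes the right-hand side at least $\tfrac{Mc}{2}\|r\|_{i,0,1}$, so $\lambda=Mc/2$ works.

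The main obstacle is the first step — the uniform lower bound $\mu A(t)\ge Mc$ on $[c,1-c]$ — precisely because $A(t)$ need not be square and carries no determinant or adjugate, so the direct computation of Lemma \ref{lemma_norm_ineq_H} is unavailable. This is exactly the gap that Lemma \ref{lemma_mu_diff} is designed to close: its conclusion that $\mu A$ is $C^1$ up to the vanishing endpoints, with explicit nonzero one-sided derivatives, converts the second-order vanishing of $\det B$ into the required linear lower bound on the mininorm. Once that is in hand, the remaining integral estimates are routine and identical in spirit to the square case.
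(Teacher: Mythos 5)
Your proposal is correct and follows essentially the same route as the paper: Lemma \ref{lemma_mu_diff} supplies the $C^1$-smoothness of $\mu A(t)$ with nonzero one-sided derivatives at the vanishing endpoints, which yields the lower bound $\mu A(t)\ge Mc$ on $[c,1-c]$, and the remaining integral estimates repeat the proof of Lemma \ref{lemma_norm_ineq_H} with $\nu=1$, including the same choice $c=\min\left(\frac{\alpha M}{4(M+N)},\delta\right)$.
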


\begin{Proof}
    Пусть $[a,b]=[0,1]$. Заметим, что ядра матриц $A(t)$ и $B(t)$ совпадают для любого $t$. Пусть выполнено условие 4. Так как $\det B(t)>0$ при $t\in[0,1]$, то $\ker A(t)=\{0\}$ для любых $t\in[0,1]$. Значит, найдется такое $\lambda>0$, что 
    \[
        \min_{t\in[0,1]} \mu A(t)=\min_{t\in[0,1]}\min_{|v|=1}|A(t)v|=\lambda>0.
    \]
    Если данный минимум равен нулю, то он достигается при некоторых $t$ и $v$, что противоречит условию $\ker A(t)=\{0\}$, $t\in[0,1]$. Следовательно,
    \[
        \int_0^1|A(t)r(t)|\,dt\geq \lambda\int_0^1 |r(t)|dt,
    \]
    и мы получаем нужное неравенство.
    
    Предположим, что выполнено условие 1. Из леммы \ref{lemma_mu_diff} известно, что найдется такое $\delta>0$, что функция $\mu A(t)$ непрерывно дифференцируема на $[0,\delta]$ и на ${[1-\delta,1]}$, притом односторонняя производная функции $\mu A(t)$ в нуле положительна, а в единице отрицательна. Значит, уменьшая $\delta$, если нужно, мы можем считать, что функция $\mu A(t)$ монотонна на каждом из отрезков $[0,\delta]$, $[1-\delta,1]$.

    Так как $\det B(t)>0$ на $(0,1)$, то $\mu A(t)>0$ на $(0,1)$ и, уменьшая $\delta$, если нужно, мы получим, что для любого $c\in (0,\delta)$
$$
\min_{t\in[c,1-c]}\mu A(t)=\min(\mu A(c),\mu A(1-c)),
$$ 

Ввиду того, что производная функции $\mu A(t)$ ненулевая в точках $t=0$ и $t=1$, мы можем найти такое число $M>0$, не зависящее от $c\in(0,\delta)$, что
\begin{equation*}
\label{mind1}
\min_{t\in[c,1-c]}\mu A(t)\geq Mc.
\end{equation*}

Тогда
\begin{equation}
\label{Mc2}
\int_c^{1-c}|A(t)r(t)|\;dt\geq Mc
\int_c^{1-c}|r(t)|\;dt
\end{equation}
для $c\in(0,\delta)$.

Формула \eqref{Mc2} аналогична формуле \eqref{Mc001}, дальнейшее рассуждение аналогично стратегии, приведенной в доказательстве леммы \ref{lemma_norm_ineq_H}.

\end{Proof}

Доказательство теоремы \ref{th_diff_C} с учетом леммы \ref{lemma_norm_ineq_2} полностью аналогично доказательству теоремы \ref{th_diff_K}.

\vskip5mm
\nopagebreak\vskip3mm\nopagebreak

{\footnotesize

\par}

\nopagebreak\vskip1.5mm\nopagebreak

\vskip5mm
\begin{otherlanguage}{english}

\noindent{\bf Local identifiability of a parameter function \\in a system of differential equations$^*$\footnote{$^*$The work was performed at the Saint Petersburg Leonhard Euler International Mathematical Institute and supported by the Ministry of Science and Higher Education of the Russian Federation (agreement no. 075–15–2025–343 of April 29, 2025).}}\\[2mm]
\textit{V.\,S.\,Shalgin  
}\\[1.5mm]
{\footnotesize
St. Petersburg State University, 7–9, Universitetskaya nab., St. Petersburg, 199034, Russian Federation,\\
vladimir.shalgin@spbu.ru, st086496@student.spbu.ru }

\vskip3mm

{\small
{\leftskip=7mm\noindent
{\sc Abstract.} In this paper, we consider the problem of local parameter identifiability of a parameter function in a system of ordinary differential equations. Previously, in this problem, the case where the dimensions of a parameter and a solution of a system coincide was considered, and a specific class of systems was identified, for which sufficient conditions for local parametric identifiability were obtained. We extend these results and consider a wider class of systems of differential equations, as well as the case where the dimension of a parameter is less than or equal to the dimension of a solution of a system. In both cases, sufficient conditions are derived for the local identifiability of a parameter function based on observations of a solution at a finite number of points.
\\[1mm]
\textbf{Keywords}: differential equation, local parameter identifiability, infinite-dimensional pa\-ra\-me\-ter, singular matrix decomposition.
\par}

\vskip5mm
\noindent\textbf{References}
\par}
\nopagebreak\vskip3mm\nopagebreak

{\footnotesize
[1] N. A. Bodunov, 
{\it An Introduction to the Theory of Local Parameter Identifiability}, 
St. Petersburg University Press, St. Petersburg, 2006 (in Russian).

[2]
S. Yu. Pilyugin, V. S. Shalgin,
Local parameter identifiability: case of discrete infinite-dimensional parameter, 
{\it Journal of Dynamical and Control Systems}, {\bf 30(14)} (2024).

[3]
V. S. Shalgin, 
Local parameter identifiability of an infinite-dimensional parameter in discrete linear dynamical systems, 
\textit{Differential Equations and Control Processes}, \textbf{2} (2025), 89---98.

[4] 
N. A. Bodunov, G. I. Volfson, 
Local identifiability of systems with a variable parameter, 
{\it Differential Equations and Control Processes}, {\bf 2} (2009), 17---31 (In Russian).

[5]
G. I. Volfson, 
Local parametric identifiability for system with finite family of parameters,
{\it Vestnik Udmurtskogo Universiteta.
Matematika. Mekhanika. Komp'yuternye Nauki}, {\bf 1} (2011), 8---13 (In Russian).

[6]
S. Yu. Pilyugin, V. S. Shalgin, 
Conditions for local parameter identifiability for systems of differential equations with an infinite-dimensional parameter, 
{\it Vestnik St. Petersburg University. Mathematics}, {\bf 56(4)} (2023), 549---558.

[7]
R. A. Horn, C. R. Johnson,
{\it Topics in Matrix Analysis},
Cambridge, 1st. ed., 1991.

[8]
T. Kato,
\textit{Perturbation theory for linear operators},
Heidelberg, Springer Berlin, (1966).
[Russ. ed.: T. Kato, Teoriya vozmushchenij linejnyh operatorov, Moscow, Mir Publ., (1972)]

[9]
L. Dieci, T. Eirola,
On Smooth Decompositions of Matrices,
{\it SIAM Journal on Matrix Analysis and Applications}, 
{\bf 20(3)}, 800-819, (1999).

\par}

\end{otherlanguage}

\end{document}